\title{Separation dimension of sparse graphs}
\author[1]{Manu~Basavaraju}
\author[2]{L.~Sunil~Chandran}
\author[3]{Rogers~Mathew\footnote{Supported by VATAT Post-doctoral Fellowship, Council of Higher Education, Israel.}}
\author[3]{Deepak~Rajendraprasad\footnote{Supported by VATAT Post-doctoral Fellowship, Council of Higher Education, Israel.}}
\affil[1]{
	University of Bergen, Postboks 7800, NO-5020 Bergen.\authorcr
	\texttt{manu.basavaraju@ii.uib.no}
}
\affil[2]{
	Department of Computer Science and Automation, \authorcr 
	Indian Institute of Science, Bangalore, India - 560012. \authorcr
	\texttt{sunil@csa.iisc.ernet.in}
}
\affil[3]
{
	The Caesarea Rothschild Institute, Department of Computer Science, \authorcr
	University of Haifa, 31095, Haifa, Israel. \authorcr
	\texttt{\{rogersmathew, deepakmail\}@gmail.com}
}
\theoremstyle{definition}
\newtheorem{definition}{Definition}
\theoremstyle{plain}
\newtheorem{theorem}{Theorem}
\newtheorem{lemma}[theorem]{Lemma}
\theoremstyle{remark}
\newtheoremstyle{plainitshape}
  {}
  {}
  {\itshape}
  {}
  {\itshape}
  {.}
  {0.5em}
  {}
\theoremstyle{plainitshape}
\newtheorem{claim}{Claim}[theorem]
\newtheorem{property}{Property}
\newtheoremstyle{cases}
  {}
  {}
  {}
  {}
  {}
  {\newline}
  {0.5em}
  {{\itshape \thmname{#1}} \thmnumber{#2} ({\itshape\thmnote{#3}}).\medskip}
\theoremstyle{cases}
\newtheorem{case}{Case}
\newtheoremstyle{constructions}
  {}
  {}
  {}
  {}
  {}
  {}
  {0.5em}
  {{\itshape \thmname{#1}} \thmnumber{#2}\medskip}
\theoremstyle{constructions}
\newtheorem{construction}{Construction}[theorem]
\newcommand{\boxli}{\pi}
\newcommand{\idim}{\operatorname{dim}}
\newcommand{\leftend}{l}
\newcommand{\rightend}{r}
\newcommand{\floor}[1]{\left\lfloor #1 \right\rfloor}
\newcommand{\ilog}{\operatorname{log^{\star}}}
\newcommand{\order}[1]{O\left( #1 \right)}
\newcommand{\orderatleast}[1]{\Omega\left( #1 \right)}
\newcommand{\orderexactly}[1]{\Theta\left( #1 \right)}
\def\into{\rightarrow}
\def\N{\mathbb{N}}
\def\R{\mathbb{R}}
\def\F{\mathcal{F}}
\def\E{\mathcal{E}}
\begin{document}
\maketitle

\begin{abstract}
The {\em separation dimension} of a graph $G$ is the smallest natural number $k$ for which the vertices of $G$ can be embedded in $\R^k$ such that any pair of disjoint edges in $G$ can be separated by a hyperplane normal to one of the axes. Equivalently, it is the smallest possible cardinality of a family $\F$ of permutations of the vertices of $G$ such that for any two disjoint edges of $G$, there exists at least one permutation in $\F$ in which all the vertices in one edge precede those in the other. In general, the maximum separation dimension of a graph on $n$ vertices is $\orderexactly{\log n}$. In this article, we focus on sparse graphs and show that the maximum separation dimension of a $k$-degenerate graph on $n$ vertices is $\order{k \log\log n}$ and that there exists a family of $2$-degenerate graphs with separation dimension $\orderatleast{\log\log n}$. We also show that the separation dimension of the graph $G^{1/2}$ obtained by subdividing once every edge of another graph $G$ is at most $(1 + o(1)) \log\log \chi(G)$ where $\chi(G)$ is the chromatic number of the original graph.

\vspace{1ex}
\noindent\textbf{Keywords:} separation dimension, boxicity, scrambling permutation, line graph, degeneracy 
\end{abstract}


\section{Introduction}

Let $\sigma:U \into [n]$ be a permutation of  elements of an $n$-set $U$. For two disjoint subsets $A,B$ of $U$, we say  $A \prec_{\sigma} B$ when every element of $A$ precedes every element of $B$ in $\sigma$, i.e., $\sigma(a) < \sigma(b), ~\forall (a,b) \in A \times B$. 
We say that $\sigma$ {\em separates} $A$ and $B$ if either $A \prec_{\sigma} B$ or $B \prec_{\sigma} A$. We use $a \prec_{\sigma} b$ to denote $\{a\} \prec_{\sigma} \{b\}$. For two subsets $A, B$ of $U$, we say $A \preceq_{\sigma} B$ when $A \setminus B \prec_{\sigma} A\cap B \prec_{\sigma} B \setminus A$. 

Families of permutations which satisfy some type of ``separation'' properties have been long studied in combinatorics. One of the early examples of it is seen in the work of Ben Dushnik in 1950 where he introduced the notion of \emph{$k$-suitability} \cite{dushnik}. A family $\F$ of permutations of $[n]$ is \emph{$k$-suitable} if, for every $k$-set $A \subseteq [n]$ and for every $a \in A$, there exists a $\sigma \in \mathcal{F}$ such that $A \preceq_{\sigma} \{a\}$. Let $N(n,k)$ denote the cardinality of a smallest family of permutations that is $k$-suitable for $[n]$. In 1971, Spencer \cite{scramble} proved that $\log \log n \leq N(n,3) \leq N(n,k) \leq k2^k\log \log n$. He also showed that $N(n,3) < \log \log n + \frac{1}{2} \log \log \log n + \log (\sqrt{2}\boxli) + o(1)$. Fishburn and Trotter, in 1992, defined the \emph{dimension} of a hypergraph on the vertex set $[n]$ to be the minimum size of a family $\F$ of permutations of $[n]$ such that every edge of the hypergraph is an intersection of \emph{initial segments} of $\F$ \cite{FishburnTrotter1992}. It is easy to see that an edge $e$ is an intersection of initial segments of $\F$ if and only if for every $v \in [n] \setminus e$, there exists a permutation $\sigma \in \F$ such that $e \prec_{\sigma} \{v\}$. F\"{u}redi, in 1996, studied the notion of \emph{$3$-mixing} family of permutations \cite{Furedi1996}. A family $\F$ of permutations of $[n]$ is called $3$-mixing if for every $3$-set $\{a, b, c\} \subseteq [n]$ and a designated element $a$ in that set, one of the permutations in $\F$ places the element $a$ between $b$ and $c$. It is clear that $a$ is between $b$ and $c$ in a permutation $\sigma$ if and only if $\{a,b\} \preceq_{\sigma} \{a,c\}$ or $\{a.c\} \preceq_{\sigma} \{a,b\}$. Such families of permutations with small sizes have found applications in showing upper bounds for many combinatorial parameters like poset dimension \cite{kierstead1996order}, product dimension \cite{FurediPrague}, boxicity \cite{RogSunSiv} etc.  

This paper is a part of our broad investigation\footnote{Most of our initial results on this topic are available as a preprint in arXiv \cite{basavaraju2012pairwise}. This paper is a subset of the same. A disjoint subset of the results there along with some new ones have been submitted to WG 2014 and is currently under review.}  on a similar class of permutations which we make precise next.

\begin{definition}
\label{definitionPairwiseSuitable}
A family $\F$ of permutations of $V(H)$ is \emph{pairwise suitable} for a hypergraph $H$ if, for every two disjoint edges $e,f \in E(H)$, there exists a permutation $\sigma \in \F$  which separates $e$ and $f$. The cardinality of a smallest family of permutations that is pairwise suitable for $H$ is called the {\em separation dimension} of $H$ and is denoted by $\boxli(H)$. 
\end{definition}

A family $\F = \{\sigma_1, \ldots, \sigma_k\}$ of permutations of a set $V$ can be seen as an embedding of $V$ into $\R^k$ with the $i$-th coordinate of $v \in V$ being the rank of $v$ in the $\sigma_i$. Similarly, given any embedding of $V$ in $\R^k$, we can construct $k$ permutations by projecting the points onto each of the $k$ axes and then reading them along the axis, breaking the ties arbitrarily. From this, it is easy to see that $\boxli(H)$ is the smallest natural number $k$ so that the vertices of $H$ can be embedded into $\R^k$ such that any two disjoint edges of $H$ can be separated by a hyperplane normal to one of the axes. This motivates us to call such an embedding a {\em separating embedding} of $H$ and $\boxli(H)$ the {\em separation dimension} of $H$. 

A major motivation for us to study this notion of separation is its interesting connection with a certain well studied geometric representation of graphs. The \emph{boxicity} of a graph $G$ is the minimum natural number $k$ for which $G$ can be represented as an intersection graph of axis-parallel boxes in  $\R^k$.  The separation dimension of a hypergraph $H$ is equal to the boxicity of the intersection graph of the edge set of $H$, i.e., the line graph of $H$ \cite{basavaraju2012pairwise}.


\section{Separating sparse graphs}

It is known that $\boxli(K_n) \in \orderexactly{\log n}$, where $K_n$ denotes the complete graph on $n$ vertices \cite{basavaraju2012pairwise}. It is easy to see that separation dimension is a monotone property, i.e., $\boxli(G') \leq \boxli(G)$ if $G'$ is a subgraph of $G$. So it is interesting to check whether the separation dimension of sparse graph families can be much lower than $\log n$. Here, by a {\em sparse family of graphs}, we mean a family of graphs with $m \in O(n)$ where $m$ and $n$ denote, respectively, the number of edges and vertices in the graph. One way to ensure sparsity of a family of graphs is to demand that the maximum degree of the graphs be bounded. We know that for any graph with maximum degree at most $\Delta$, its separation dimension is at most $2^{9 \ilog \Delta} \Delta$ \cite{basavaraju2012pairwise}. But not all families of sparse graphs have bounded maximum degree - trees for instance. Sparsity of a graph family, as we consider it here, is equivalent to the restriction that the graphs in the family have a bounded average degree. But globally sparse graphs with a small dense subgraph can have large separation dimension. For example if we consider an $n$ vertex graph which is a disjoint union of a complete graph on $\floor{\sqrt{n}}$ vertices and remaining isolated vertices, it has at most $n/2$ edges, but has a separation dimension in $\orderatleast{\log n}$ due to the clique (monotonicity). Hence we see that sparsity is needed across all subgraphs in order to hope for a better upper bound for separation dimension. One common way of ensuring local sparsity of a graph family is to demand that the degeneracy (cf. Definition \ref{definitionDegeneracy}) of the graphs in the family be bounded. 

\begin{definition}
\label{definitionDegeneracy}
For a non-negative integer $k$, a graph $G$ is \emph{$k$-degenerate} if the vertices of $G$ can be enumerated in such a way that every vertex is succeeded by at most $k$ of its neighbours. The least number $k$ such that $G$ is $k$-degenerate is called the \emph{degeneracy} of $G$ and any such enumeration is referred to as a \emph{degeneracy order} of $V(G)$. 
\end{definition}

For example, trees and forests are 1-degenerate and planar graphs are 5-degenerate. Series-parallel graphs, outerplanar graphs, non-regular cubic graphs, circle graphs of girth at least 5 etc. are 2-degenerate.  It is easy to verify that if the maximum average degree over all subgraphs of a graph $G$ is $d$, then $G$ is $\floor{d}$-degenerate. A $\floor{d}$-degeneracy order of $G$ can be obtained by recursively picking out a minimum degree vertex from $G$. It is also easy to see that any subgraph of a $k$-degenerate graph has maximum degree at most $2k$.

In this paper we establish the following upper bound on separation dimension of $k$-degenerate graphs and there by give an affirmative answer to our question under a restricted but necessary condition of sparsity. 
 
\begin{theorem}
\label{theoremBoxliDegeneracy}
For a $k$-degenerate graph $G$ on $n$ vertices, $\boxli(G) \in O(k \log \log n)$.  
\end{theorem}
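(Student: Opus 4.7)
The plan is to combine the degeneracy orientation of $G$ with Spencer's small $3$-suitable family of permutations. First I would fix a degeneracy ordering $v_1, \ldots, v_n$ of $V(G)$ and orient every edge from its earlier endpoint (the \emph{tail}) to its later endpoint (the \emph{head}); each vertex then has out-degree at most $k$. At each vertex $v$ I label its outgoing edges with distinct indices from $[k]$, and call an edge an \emph{$i$-edge} when its tail labels it with $i$. A key structural observation is that any two disjoint edges must have distinct tails and distinct heads, since a shared tail or a shared head would force a shared endpoint. Including the degeneracy ordering itself as one permutation in the family already separates every pair of disjoint edges whose vertex-positions form two non-overlapping intervals, reducing the problem to the \emph{interleaving} configurations in which the tail of the later edge falls strictly between the two endpoints of the earlier one.

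Next, let $\mathcal S$ be a $3$-suitable family of permutations of $V(G)$ of size $N(n,3) = O(\log \log n)$ guaranteed by Spencer's theorem. For every $\sigma \in \mathcal S$ and every $i \in [k]$, build one permutation $\tau_{\sigma,i}$ by lex-ordering vertices according to the pair $\bigl(\sigma(h_i(v)),\, \sigma(v)\bigr)$, where $h_i(v)$ denotes the head of $v$'s $i$-edge if one exists and $h_i(v)=v$ otherwise. Intuitively, $\tau_{\sigma,i}$ ``glues'' each vertex into the $\sigma$-slot of its $i$-out-neighbour, partitioning $V(G)$ into blocks indexed by the $\sigma$-ranks of the possible $i$-targets. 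The overall family has size $1 + k|\mathcal S| \in O(k \log \log n)$, matching the desired bound.

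Separation of two interleaving disjoint edges $e=\{u_e,w_e\}$ and $f=\{u_f,w_f\}$ with labels $i_e, i_f$ is then verified by case analysis. In the cleanest subcase, $i_e = i_f = i$ and neither head emits an $i$-edge, so $\{u_e, w_e\}$ sits in the block keyed by $\sigma(w_e)$ and $\{u_f, w_f\}$ in the block keyed by $\sigma(w_f)$; every $\sigma \in \mathcal S$ already separates them. When $w_f$ does emit an $i$-edge to some $x_f$, the $3$-suitability of $\mathcal S$ applied to the three-element set $\{w_e, w_f, x_f\}$ yields a $\sigma$ that pushes both of $u_f, w_f$'s blocks to the same side of $\{u_e, w_e\}$'s block, producing the required separation. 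Symmetric arguments, switching to $\tau_{\sigma, i_f}$ when $i_e \ne i_f$ or when the above collisions occur, are meant to close the remaining subcases.

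The main obstacle is precisely the cascade/collision problem in this last step. The three ``key'' vertices used to invoke $3$-suitability can collapse to fewer than three distinct elements — for instance, when one head is itself the tail of an $i$-edge pointing back into the other edge, so that $x_f = w_e$. When this occurs, the designated vertex is forced into the wrong block and the $3$-suitability guarantee does not apply directly. The delicate part of the proof is to show that for every such adversarial configuration, one of the $2k|\mathcal S|$ candidate permutations $\tau_{\sigma, i_e}$ or $\tau_{\sigma, i_f}$ — possibly together with a reverse-glued companion ordering by $\bigl(\sigma(h_i(v)),\, -\sigma(v)\bigr)$ — still separates $e$ from $f$. Keeping the total count at $O(k\log\log n)$, rather than letting it balloon to $O(k^2 \log\log n)$ by taking a product over all pairs of indices, is the crux, and it is here that the bounded-out-degree property of the degeneracy orientation enters essentially.
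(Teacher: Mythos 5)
You have correctly identified the one obstacle that your construction does not overcome, and unfortunately it is fatal as stated: the ``cascade'' problem is not a delicate corner case to be swept up at the end, but the reason your blocks fail to contain the edges they are supposed to certify. In $\tau_{\sigma,i}$ the tail $u_e$ of an $i$-edge lands in the block keyed by $\sigma(w_e)$, but its head $w_e$ lands in the block keyed by $\sigma(h_i(w_e))$, which is a different block whenever $w_e$ itself emits an $i$-edge. So an $i$-edge need not lie inside a single block at all, and the whole ``separate the blocks with $\sigma$'' mechanism breaks before the three-element collisions you describe even arise. You gesture at resolving this with the companion orderings and with $3$-suitability applied to $\{w_e, w_f, x_f\}$, but you never carry out that case analysis, and there are genuinely bad configurations (e.g.\ $h_i(w_f) = w_e$ or chains $u \to w \to x$ threading through both edges) for which no argument is given. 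As written, the proof is incomplete.

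The missing idea is to replace your out-degree-$1$ forests by \emph{star forests}. A $k$-degenerate graph has arboricity at most $k$ and hence star arboricity at most $2k$: split each of your $k$ colour classes (which are forests, since the orientation is acyclic) into two star forests. In a star forest there is no directed path of length two, so every edge lies entirely inside one block (one star), the centre of the star is a canonical ``root'' that can be placed \emph{last} within its block, and the leaves within a block can be ordered by $\sigma$. This kills the cascade outright. The remaining cases are then: both vertices of the other edge outside the star (block-level $3$-suitability), both inside (leaf-level $3$-suitability plus root-last), or exactly one inside --- and this last case is why you also need each block order together with its \emph{reversal}, giving $2 \cdot 2k \cdot |\mathcal{S}| = O(k \log\log n)$ permutations in total. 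Your instinct that taking a product over pairs of indices would inflate the count to $O(k^2\log\log n)$ is a red herring: for a given pair of disjoint edges you only ever work inside the star forest containing \emph{one} of the two edges, treating the other edge's endpoints as arbitrary vertices, so no pairing of indices is needed.
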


We prove this by decomposing $G$ into $2k$ star forests and using $3$-suitable permutations of the stars in every forest and the leaves in every such star simultaneously. The proof is given in Appendix \ref{sectionDegeneracy}. We show that the $\log \log n$ factor in Theorem \ref{theoremBoxliDegeneracy} cannot be improved in general by estimating the exact order of the separation dimension of a fully subdivided clique.

\begin{definition}
A graph $G'$ is called a {\em subdivision} of a graph $G$ if $G'$ is obtained from $G$ by replacing a subset of edges of $G$ with independent paths between their ends such that none of these new paths has an inner vertex on another path or in $G$. A subdivision of $G$ where every edge of $G$ is replaced by a $k$-length path is denoted as $G^{1/k}$. The graph $G^{1/2}$ is called {\em fully subdivided} $G$.
\end{definition}

It is easy to see that $G^{1/2}$ is a $2$-degenerate graph for any graph $G$. A $2$-degeneracy order can be obtained by picking out the vertices introduced by the subdivision first. 
\begin{theorem}
\label{theoremKnHalf}
Let $K_n^{1/2}$ denote the graph obtained by fully subdividing $K_n$. Then, 
$$ \frac{1}{2} \floor{\log\log(n-1)} \leq \boxli(K_n^{1/2}) \leq (1 + o(1))\log\log (n-1).$$ 
\end{theorem}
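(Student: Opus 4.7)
The strategy is to connect both bounds to Spencer's asymptotic estimate $N(n,3) = (1+o(1))\log\log n$ for the size of the smallest $3$-suitable family on $[n]$.

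For the upper bound, let $\mathcal{S}$ be a $3$-suitable family of permutations of $[n-1]$ of cardinality $(1+o(1))\log\log(n-1)$, and extend every $\sigma \in \mathcal{S}$ to a permutation of $[n]$ by setting $\sigma(n) = n$. For each such $\sigma$, I construct a permutation $\tau_\sigma$ of $V(K_n^{1/2})$ by first listing the original vertices in the order $v_{\sigma^{-1}(1)}, v_{\sigma^{-1}(2)}, \ldots, v_{\sigma^{-1}(n)}$, and then, immediately after $v_{\sigma^{-1}(p)}$, inserting all subdivision vertices $w_{\sigma^{-1}(p),j}$ with $\sigma(j) < p$ (in any relative order). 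Equivalently, $w_{ij}$ is placed directly after the $\sigma$-later of $v_i$ and $v_j$. The crucial observation to verify is this: for any two disjoint edges $e = \{v_i, w_{ij}\}$ and $f = \{v_k, w_{k\ell}\}$ of $K_n^{1/2}$ with $i \neq k$, if $\sigma$ places $k$ last among $\{i,j,k\}$ then by construction both $v_i$ and $w_{ij}$ have been inserted strictly before $v_k$, while $w_{k\ell}$ is inserted no earlier than $v_k$, so $e \prec_{\tau_\sigma} f$. Since $3$-suitability of $\mathcal{S}$ always supplies such a $\sigma$ (with the role of $k$ possibly swapped to $i$ in the triple $\{i,k,\ell\}$ when indices coincide, e.g.\ $j=k$), the family $\{\tau_\sigma : \sigma \in \mathcal{S}\}$ is pairwise suitable, giving the claimed upper bound.

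For the lower bound, the plan is to reduce, conversely, back to Spencer's lower bound $N(n-1,3) \geq \log\log(n-1)$. From a pairwise suitable family $\mathcal{F} = \{\tau_1, \ldots, \tau_d\}$ for $K_n^{1/2}$ I would extract $2d$ permutations of $[n-1]$ as follows: $\sigma^V_k$ ranks the indices $1, \ldots, n-1$ by the $\tau_k$-positions of $v_1, \ldots, v_{n-1}$, and $\sigma^W_k$ ranks them by the $\tau_k$-positions of $w_{1,n}, \ldots, w_{n-1,n}$. The guiding observation is that whenever $\tau_k$ separates the two disjoint matching edges $e_a = \{v_a, w_{a,n}\}$ and $e_c = \{v_c, w_{c,n}\}$ in the direction $e_a \prec_{\tau_k} e_c$, the definition of edge separation forces both $v_a \prec_{\tau_k} v_c$ and $w_{a,n} \prec_{\tau_k} w_{c,n}$, so $\sigma^V_k$ and $\sigma^W_k$ agree on every such pair. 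If I can show that the family $\{\sigma^V_k, \sigma^W_k : k \in [d]\}$ is $3$-suitable on $[n-1]$, then $2d \geq N(n-1,3) \geq \log\log(n-1)$, which yields $d \geq \tfrac{1}{2}\lfloor \log\log(n-1)\rfloor$.

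The main obstacle is this $3$-suitability claim. Given a triple $\{a,b,c\} \subseteq [n-1]$ with designated element $c$, I must produce some $\tau_k$ that places either $v_c$ after both $v_a, v_b$, or $w_{c,n}$ after both $w_{a,n}, w_{b,n}$. Applying pairwise suitability to the three disjoint matching edges $e_a, e_b, e_c$ only guarantees that each of the three \emph{pairs} is separated by some (possibly different) $\tau_k$, and direct case analysis does not force a single $\tau_k$ with both $v_a \prec v_c$ and $v_b \prec v_c$. I expect the actual argument to bring in non-matching disjoint-edge pairs as well, such as $(\{v_a, w_{a,b}\}, e_c)$ and $(\{v_b, w_{a,b}\}, e_c)$, whose separation constrains the joint positions of $v_a, v_b, v_c, w_{a,b}, w_{c,n}$ simultaneously, and to derive a contradiction with the assumption that no $\tau_k$ places $c$ last in either $\sigma^V_k$ or $\sigma^W_k$. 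Executing this combinatorial case analysis is where the bulk of the technical effort will lie.
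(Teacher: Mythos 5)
Your upper bound takes a genuinely different route from the paper (which derives it from an interval-order construction and the F\"uredi--Hajnal--R\"odl--Trotter bound on the dimension of interval orders of bounded height), and the underlying idea --- place each subdivision vertex immediately after the later of its two neighbours and let $3$-suitability of the base permutations do the separating --- is essentially sound. But the specific family you use is not $3$-suitable where you need it: extending a $3$-suitable family of $[n-1]$ by forcing $\sigma(n)=n$ destroys $3$-suitability for every triple containing $n$ whose designated element is not $n$. Concretely, in every $\tau_\sigma$ of your family the vertices $w_{i,n}$ and $w_{k,n}$ both land after $v_n$, hence after both $v_i$ and $v_k$, so the disjoint edges $\{v_i,w_{i,n}\}$ and $\{v_k,w_{k,n}\}$ are never separated by any permutation you construct. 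The fix is easy --- take a $3$-suitable family of $[n]$ itself, of size $N(n,3)=(1+o(1))\log\log (n-1)$; your case analysis then goes through for all coincidence patterns of $\{i,j\}$ and $\{k,\ell\}$ --- but as written the construction fails.

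The lower bound is not a proof: everything hinges on the claim that $\{\sigma^V_k,\sigma^W_k : k\in[d]\}$ is $3$-suitable on $[n-1]$, and you yourself flag that pairwise suitability of $\F$ does not visibly imply it and that you do not know how to close the case analysis. That claim is the entire content of the lower bound, and nothing you write makes it plausible that a single $\tau_k$ must place $c$ last in one of the two derived orders; pairwise suitability only separates each pair of matching edges in \emph{some} permutation, with no control over which permutation handles which pair. The paper's argument is structurally different and sidesteps this: it applies the Erd\H{o}s--Szekeres theorem $r$ times to extract $p=(n-1)^{1/2^r}+1$ original vertices that every permutation in $\F$ orders identically (after harmless reversals) --- this is exactly where the factor $\tfrac{1}{2}$ and the floor come from --- then normalises each $u_{ij}$ to lie between $v_i$ and $v_j$, and shows that the induced orders on these subdivision vertices form a realiser of the canonical open interval order $(C_p,\lhd)$, whose dimension is at least $\log\log(p-1)$ by F\"uredi--Hajnal--R\"odl--Trotter. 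If you want to pursue your reduction, you must actually prove the $3$-suitability claim (or replace it with a weaker property that still forces $\Omega(\log\log n)$ permutations); as it stands, the lower-bound half is a plan with its central step missing.
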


We establish the lower bound, quite laboriously, by using Erd\H{os}-Szekeres Theorem to extract a large enough set of vertices of the underlying $K_n$ that are ordered essentially the same by every permutation in the selected family and then showing that separating the edges incident on those vertices can be modelled as a problem of finding a realiser for a canonical open interval order of same size. The details are given in Appendix \ref{sectionSubdividedClique}. The upper bound follows from the next result.

Prompted by the above two results, we investigate deeper the separation dimension of fully subdivided graphs and establish the following.

\begin{theorem}
\label{theoremSubdivisionChromaticNumber}
For a graph $G$ with chromatic number $\chi(G)$, 
$$ 
	\boxli(G^{1/2}) \leq 
		\log\log (\chi(G)-1) + 
		\left( \frac{1}{2} + o(1) \right) \log\log\log (\chi(G)-1) + 2.
$$
\end{theorem}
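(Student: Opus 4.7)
The plan is to fix a proper $\chi(G)$-coloring $c\colon V(G) \to [\chi]$ of $G$, use a small $3$-suitable family of permutations of the color palette to lift to $V(G^{1/2})$, and supplement with two further permutations handling residual cases. A disjoint pair of edges in $G^{1/2}$ has the form $e_1 = (u, w_{uv})$ and $e_2 = (x, w_{xy})$ with $u \neq x$ and $\{u,v\} \neq \{x,y\}$ in $E(G)$. Under the projection $\phi$ onto $K_\chi^{1/2}$ induced by $c$, two qualitatively different scenarios arise: the \emph{generic} case where $\phi$ sends the pair to disjoint edges in $K_\chi^{1/2}$, and the \emph{degenerate} case where $\phi(e_1)$ and $\phi(e_2)$ share a vertex (either $c(u) = c(x)$, so the shared vertex is an original one, or $\{c(u),c(v)\} = \{c(x),c(y)\}$, so the shared vertex is a subdivision one).

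For the generic case, I will invoke Spencer's theorem on $3$-suitable families (cited in the introduction) to obtain a family $\F_\star$ of permutations of $[\chi-1]$ with $|\F_\star| = N(\chi-1,3) \leq \log\log(\chi-1) + \left(\frac{1}{2}+o(1)\right) \log\log\log(\chi-1) + O(1)$. Each $\tau \in \F_\star$ will be extended to a permutation $\hat\tau$ of $[\chi]$ by appending the missing color as an extremum, and then lifted to a permutation $\sigma_{\hat\tau}$ of $V(G^{1/2})$ by sorting according to the key that assigns to the original vertex $u$ the value $\hat\tau(c(u))$ and to the subdivision vertex $w_{uv}$ the value $\max(\hat\tau(c(u)), \hat\tau(c(v)))$, with ties broken consistently (originals before subdivisions, then by identity). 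A direct calculation from $3$-suitability, applied to appropriate $3$-subsets of colors occurring in $\phi(e_1) \cup \phi(e_2)$, will show that every generic disjoint pair is separated by some $\sigma_{\hat\tau}$.

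Finally, I will add two extra permutations $\sigma^{\mathrm{I}}, \sigma^{\mathrm{II}}$ for the degenerate case, both built from an arbitrary enumeration $\iota$ of $V(G)$ that clusters each original vertex with some of its adjacent subdivision vertices: in $\sigma^{\mathrm{I}}$, each $u \in V(G)$ is immediately followed by those $w_{uv}$ with $\iota(u) < \iota(v)$, and in $\sigma^{\mathrm{II}}$ by those with $\iota(u) > \iota(v)$. The two opposite orientations together will guarantee that every degenerate pair is placed into disjoint clusters by at least one of the two, so that the total family has size $|\F_\star| + 2$, matching the bound in the theorem. The main obstacle is the verification for the two extras: it requires a careful case analysis on the relative order of the four vertices $u, v, x, y$ in $\iota$ combined with the shared-color constraint, showing that the disjointness of $e_1, e_2$ together with the identity of the shared projection vertex forces the clustering to separate the two edges in at least one of the two orientations.
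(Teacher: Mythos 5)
There is a genuine gap, and it sits exactly where you flagged the ``main obstacle'': the two extra permutations do not cover all degenerate pairs. Call an edge $\{u,w_{uv}\}$ of $G^{1/2}$ \emph{low} if $\iota(u)<\iota(v)$ and \emph{high} if $\iota(u)>\iota(v)$. Your $\sigma^{\mathrm{I}}$ puts each $w_{uv}$ in the cluster of its $\iota$-smaller endpoint, so it separates any two disjoint low edges, and $\sigma^{\mathrm{II}}$ separates any two disjoint high edges; but a degenerate pair consisting of one low and one high edge is separated by neither. Concretely, take edges $\{u,v\},\{x,y\}\in E(G)$ with $c(u)=c(x)$ and $\iota(y)<\iota(u)<\iota(x)<\iota(v)$, and consider $e_1=\{u,w_{uv}\}$ (low) and $e_2=\{x,w_{xy}\}$ (high). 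In $\sigma^{\mathrm{I}}$, $e_1$ lies inside the cluster of $u$ while $w_{xy}$ sits in the cluster of $y$ and $x$ in the cluster of $x$, so $e_2$ straddles $e_1$; in $\sigma^{\mathrm{II}}$ the roles reverse and $e_1$ straddles $e_2$. The lifted permutations cannot rescue this pair either: $u$ and $x$ carry the identical key $\hat\tau(c(u))$, the key of $w_{uv}$ is $\max(\hat\tau(c(u)),\hat\tau(c(v)))\geq \hat\tau(c(u))$, and your originals-before-subdivisions tie-break then forces $x\prec_{\sigma_{\hat\tau}} w_{uv}$ and $u\prec_{\sigma_{\hat\tau}} w_{xy}$ in \emph{every} $\sigma_{\hat\tau}$, so neither $e_1\prec e_2$ nor $e_2\prec e_1$ ever occurs. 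Taking $\iota$ compatible with the colouring does not help (the pattern $c(y)<c(u)=c(x)<c(v)$ is realisable), and for, say, a complete tripartite graph with parts of size at least two one can check that no enumeration $\iota$ avoids all such configurations; the same difficulty recurs for pairs with $\{c(u),c(v)\}=\{c(x),c(y)\}$ but $c(u)\neq c(x)$, where now $w_{uv}$ and $w_{xy}$ collide in key. The underlying reason is that $3$-suitability of the \emph{colours} can never provide both relative orders of $w_{uv}$ and $w_{xy}$ when their colour data coincide, yet both orders are needed. (A smaller issue: appending the missing colour as an extremum to a $3$-suitable family of $[\chi-1]$ does not yield a $3$-suitable family of $[\chi]$, so you should work with $N(\chi,3)$ directly and absorb the difference into the $o(1)$.)

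The paper closes exactly this hole by a different device. It fixes a permutation $\sigma$ of $V(G)$ listing the colour classes consecutively, views each edge $\{u,v\}$ of $G$ as the open interval $(\sigma(u),\sigma(v))$, and takes a \emph{realiser} of the resulting interval order $C_{G,\sigma}$. A realiser must, by definition, present every incomparable pair of intervals in both orders, which is precisely the two-sided separation of $w_{uv}$ and $w_{xy}$ that your mixed-orientation degenerate pairs require; comparable pairs and same-orientation pairs are handled trivially or by the two clustering permutations (the paper's $\sigma_{d+1},\sigma_{d+2}$ are essentially your $\sigma^{\mathrm{I}},\sigma^{\mathrm{II}}$, whence the ``$+2$''). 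The gain over a realiser of a generic poset on $|E(G)|$ elements is that the dimension of an \emph{interval} order is controlled by its height, which here is at most $\chi(G)-1$, and the F\"{u}redi--Hajnal--R\"{o}dl--Trotter bound $\log\log h+(\frac12+o(1))\log\log\log h$ then gives the theorem. If you want to salvage your outline, you would have to replace the $3$-suitable family of colours by some structure on the edges themselves that delivers both orders for every ``overlapping'' pair while still being bounded in terms of $\chi(G)$ alone --- which is what the interval-order realiser is.
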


We do this by associating with every graph $G$ an interval order whose dimension (cf. Definition \ref{definitionIntervalDimension} in Appendix \ref{sectionSubdivision}) is at least $\boxli(G^{1/2})$ and whose height is less than the chromatic number of $G$ and then using a result on the dimension of interval orders due to F\"{u}redi, Hajnal, R\"{o}dl and Trotter \cite{furedi1991interval}. The details are given in Appendix \ref{sectionSubdivision}. The tightness, up to a factor of $2$, of the above bound follows from the previous result that $\boxli(K_n^{1/2}) \geq \frac{1}{2} \floor{\log\log (n-1)}$.



\bibliographystyle{plain}
\bibliography{/home/deepak/Research/LaTeX/deepak.bib}

\clearpage
\appendix

\section{Notational note}
\label{sectionNotation}

All graphs considered in this article are finite, simple and undirected. The vertex set and edge set of a graph $G$ are denoted respectively by $V(G)$ and $E(G)$. For a graph $G$ and any $S \subseteq V(G)$, the subgraph of $G$ induced on the vertex set $S$ is denoted by $G[S]$. For any $v \in V(G)$, we use $N_G(v)$ to denote the neighbourhood of $v$ in $G$, i.e., $N_G(v) = \{u \in V(G) : \{v,u\} \in E(G)\}$. 

A \emph{closed interval} on the real line, denoted as $[i,j]$ where $i,j \in \R$ and $i\leq j$, is the set $\{x\in \R : i\leq x\leq j\}$. Given an interval $X=[i,j]$, define $\leftend(X)=i$ and $\rightend(X)=j$. We say that the closed interval $X$ has \emph{left end-point} $\leftend(X)$ and \emph{right end-point} $\rightend(X)$. For any two intervals $[i_1, j_1], [i_2,j_2]$ on the real line, we say that $[i_1, j_1] < [i_2,j_2]$ if $j_1 < i_2$. 

For any finite positive integer $n$, we shall use $[n]$ to denote the set $\{1,\ldots , n\}$. A permutation of a finite set $V$ is a bijection from $V$ to $[|V|]$. The logarithm of any positive real number $x$ to the base $2$ and $e$ are respectively denoted by $\log(x)$ and $\ln(x)$, while $\ilog(x)$ denotes the iterated logarithm of $x$ to the base $2$, i.e. the number of times the logarithm function (to the base $2$) should be applied so that the result is less than or equal to $1$.

\section{Upper bound: $k$-degenerate graphs}
\label{sectionDegeneracy}

For any non-negative integer $n$, a \emph{star} $S_n$ is a rooted tree on $n+1$ nodes with one root and $n$ leaves connected to the root. In other words, a star is a tree with at most one vertex whose degree is not one. A \emph{star forest} is a  disjoint union of stars. 

\begin{definition}
\label{definitionArboricty}
The \emph{arboricity} of a graph $G$, denoted by $\mathcal{A}(G)$, is the minimum number of spanning forests whose union  covers all the edges of $G$. The \emph{star arboricity} of a graph $G$, denoted by $\mathcal{S}(G)$, is the minimum number of spanning star forests whose union covers all the edges of $G$. 
\end{definition}

Clearly, $\mathcal{S}(G) \geq \mathcal{A}(G)$ from definition. Furthermore, since any tree can be covered by two star forests, $\mathcal{S}(G) \leq 2\mathcal{A}(G)$. 

For the sake of completeness, we give a proof for the following already-known lemma on star arboricity of $k$-degenerate graphs (Definition \ref{definitionDegeneracy}). 

\begin{lemma}
\label{lemmaStarArboricityDegeneracy}
For a $k$-degenerate graph $G$, $\mathcal{S}(G) \leq 2k$.  
\end{lemma}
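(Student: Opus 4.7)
The plan is to prove the lemma in two stages, leveraging the already-noted inequality $\mathcal{S}(G) \leq 2\mathcal{A}(G)$. The bulk of the work is to show that a $k$-degenerate graph has arboricity at most $k$; the factor of $2$ then comes for free from the stated fact that any tree decomposes into two star forests.

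First I would fix a degeneracy order $v_1, v_2, \ldots, v_n$ of $V(G)$ guaranteed by Definition \ref{definitionDegeneracy}, and orient every edge $\{v_i, v_j\}$ (with $i < j$) from $v_i$ to $v_j$. By the defining property of a degeneracy order, each $v_i$ has at most $k$ out-neighbours in this orientation, and the orientation is acyclic since it is consistent with a linear order on the vertices.

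Next I would colour the edges of $G$ with colours $\{1, \ldots, k\}$ as follows: at each vertex $v_i$, arbitrarily assign distinct colours from $[k]$ to the (at most $k$) out-edges of $v_i$. Let $E_c$ denote the set of edges coloured $c$. In the subgraph $G_c = (V(G), E_c)$, each vertex has out-degree at most $1$ under the inherited orientation, so $|E_c| \leq |V(G_c)| - (\text{number of sinks in } G_c)$; combined with the fact that the orientation is acyclic, $G_c$ is a forest (it cannot contain a cycle, since any oriented cycle needs some vertex of out-degree $\geq 2$ inside it, and any undirected cycle must give an oriented cycle when edges of out-degree $\leq 1$ are used). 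Hence $G$ decomposes into $k$ forests, so $\mathcal{A}(G) \leq k$.

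Finally, combining this with $\mathcal{S}(G) \leq 2\mathcal{A}(G)$ noted just before the lemma yields $\mathcal{S}(G) \leq 2k$, completing the proof. I do not expect any real obstacle here; the only mildly delicate point is the verification that each colour class is genuinely acyclic, which as above follows immediately from the acyclicity of the underlying orientation together with the out-degree bound of $1$ per colour class.
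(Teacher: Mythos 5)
Your proof is correct and follows essentially the same route as the paper: orient the edges along the degeneracy order to get an acyclic orientation with out-degree at most $k$, split the out-edges at each vertex into $k$ classes to obtain $k$ forests (so $\mathcal{A}(G) \leq k$), and then invoke $\mathcal{S}(G) \leq 2\mathcal{A}(G)$. The only difference is that you spell out the verification that each colour class is acyclic, which the paper leaves implicit.
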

\begin{proof}
By following the degeneracy order, the edges of $G$ can be oriented acyclically such that each vertex has an out-degree at most $k$. Now the edges of $G$ can be partitioned into $k$ spanning forests by choosing a different forest for each outgoing edge from a vertex. Thus, $\mathcal{A}(G) \leq k$ and $\mathcal{S}(G) \leq 2k$.   
\end{proof}

With this we now give a proof of our first result.

\subsubsection*{Proof of Theorem \ref{theoremBoxliDegeneracy}.}

\noindent \textit{Statement.}
For a $k$-degenerate graph $G$ on $n$ vertices, $\boxli(G) \in O(k \log \log n)$.

\begin{proof}
Let $B = \{b_1, \ldots , b_n\}$ and let $r = \floor{ \log \log n + \frac{1}{2}\log \log \log n + \log(\sqrt{2} \pi) + o(1) }$.  From \cite{scramble}, we know that there exists a family $\E =\{\sigma^1, \ldots , \sigma^r\}$ of permutations of $B$ that is $3$-suitable for $B$. Recall that a family $\E$ of permutations of $[n]$ is called $3$-suitable if for every $a, b_1, b_2 \in [n]$ their exists a permutation $\sigma \in \E$ such that $\{b_1, b_2\} \prec_{\sigma} \{a\}$.

By Lemma \ref{lemmaStarArboricityDegeneracy}, we can partition the edges of $G$ into a collection of $2k$ spanning star forests. Let $\mathcal{C} = \{C_1, \ldots , C_{2k} \}$ be one such 
collection. Each star in each star forest has exactly one root vertex which is a highest degree vertex in the star (ties resolved arbitrarily). 

Consider a spanning forest $C_i$, $i \in [2k]$. We construct a family $\F_i = \{\sigma_i^1 , \ldots, \sigma_i^r , \overline{\sigma}_i^1 , \ldots, \overline{\sigma}_i^r\}$ of permutations of $V(G)$ from $C_i$ as follows. In the permutation $\sigma_i^j$, the vertices of the same star of $C_i$ come together as a block, the blocks are ordered according to the permutation $\sigma^j$; within every block the root vertex comes last; and the leaves are ordered according to $\sigma^j$. The permutation $\overline\sigma_i^j$ is similar to $\sigma_i^j$ except that the blocks are ordered in the reverse order. This is formalised in Construction \ref{constructionBoxliDegeneracy}. Let $L_i$ and $l_i$, $i \in [2k]$ be functions from $V(G) \into B$ such that the following two properties hold.

\setcounter{property}{0}
\begin{property}
\label{property1Degeneracy}
$L_i(u) = L_i(v)$ if and only if $u$ and $v$ belong to the same star in $C_i$ 
\end{property}
\begin{property}
\label{property2Degeneracy}
If $u$ and $v$ belong to the same star in $C_i$, then $l_i(u) \neq l_i(v)$. 
\end{property}

It is straight forward to construct such functions.

\begin{construction}(Constructing $\sigma_i^j$ and $\overline{\sigma}_i^j$).
\label{constructionBoxliDegeneracy}
\begin{algorithmic}
\vspace{1ex}
\STATE{For any distinct $u,v \in V(G)$, }
\IF{$L_i(u) \neq L_i(v)$} 
\STATE{/*$u$ and $v$ belong to different stars in $C_i$ */}
\STATE{$u \prec_{\sigma_i^j} v \iff L_i(u) \prec_{\sigma^j} L_i(v)$}
\STATE{$u \prec_{\overline{\sigma}_i^j} v \iff L_i(v) \prec_{\sigma^j}  L_i(u)$}
\ELSE 
\STATE{/*$u$ and $v$ belong to the same star in $C_i$ */}
\IF{$u$ is the root vertex of its star in $C_i$}
\STATE{$v \prec_{\sigma_i^j} u$}
\STATE{$v \prec_{\overline{\sigma}_i^j} u$}
\ELSIF{$v$ is the root vertex of its star in $C_i$}
\STATE{$u \prec_{\sigma_i^j} v$}
\STATE{$u \prec_{\overline{\sigma}_i^j} v$}
\ELSE
\STATE{$u \prec_{\sigma_i^j} v \iff l_i(u) \prec_{\sigma^j} l_i(v)$}
\STATE{$u \prec_{\overline{\sigma}_i^j} v \iff l_i(u) \prec_{\sigma^j} l_i(v)$}
\ENDIF
\ENDIF
\end{algorithmic}
\end{construction} 

\begin{claim}
\label{claimBoxliDegeneracy}
$\mathcal{F} =  \bigcup_{i=1}^{2k} \F_i$ is a pairwise-suitable family of permutations for $G$. 
\end{claim}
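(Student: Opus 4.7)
The plan is to prove Claim~\ref{claimBoxliDegeneracy} by fixing two disjoint edges $e = \{u_1, u_2\}$ and $f = \{v_1, v_2\}$ of $G$ and locating a permutation in $\F$ that separates them, via a case analysis on the star forests containing $e$ and $f$. Since the collection $\{C_1, \ldots, C_{2k}\}$ partitions $E(G)$, there are unique indices $a, b \in [2k]$ with $e \in C_a$ and $f \in C_b$, and the first split is $a = b$ versus $a \neq b$.

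When $a = b$, two edges of the same star of $C_a$ share the root, so the disjointness of $e$ and $f$ forces them to lie in two different stars of $C_a$. Consequently $L_a(u_1) = L_a(u_2)$ and $L_a(v_1) = L_a(v_2)$ are distinct elements of $B$, and any $\sigma^j \in \E$ distinguishes them; choosing $\sigma_a^j$ or $\overline{\sigma}_a^j$ according to their relative order in $\sigma^j$ places one entire block before the other and hence separates $e$ and $f$.

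When $a \neq b$, let $S_1$ be the star of $C_a$ containing $e$, and assume without loss of generality that $u_1$ is its root and $u_2$ a leaf. Write $\alpha = L_a(u_1) = L_a(u_2)$ and $\gamma_i = L_a(v_i)$. I split on how many of $v_1, v_2$ lie in $S_1$. If neither does, then $\alpha \notin \{\gamma_1, \gamma_2\}$: when $\gamma_1 \neq \gamma_2$ the $3$-suitability of $\E$ yields $\sigma^j$ with $\{\gamma_1, \gamma_2\} \prec_{\sigma^j} \{\alpha\}$, whence $f \prec_{\sigma_a^j} e$; when $\gamma_1 = \gamma_2$ any $\sigma^j$ distinguishing $\gamma_1$ from $\alpha$ works after a suitable choice between $\sigma_a^j$ and $\overline{\sigma}_a^j$. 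If both $v_i$ lie in $S_1$, then $u_2, v_1, v_2$ are three distinct leaves whose $l_a$-values are pairwise distinct by Property~\ref{property2Degeneracy}, and $3$-suitability of $\E$ yields $\sigma^j$ with $\{l_a(v_1), l_a(v_2)\} \prec_{\sigma^j} \{l_a(u_2)\}$; since the construction always places the root of a block after its leaves, this delivers $v_1, v_2 \prec u_2 \prec u_1$ in $\sigma_a^j$.

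The delicate remaining sub-case, and what I expect to be the main obstacle, is when exactly one of $v_1, v_2$, say $v_1$, is a leaf of $S_1$ while $v_2 \notin S_1$. Inside the $S_1$-block the root $u_1$ follows every leaf in every permutation of $\F_a$, so $e \prec f$ is unattainable and I must force $f \prec e$. This requires two conditions on one and the same $\sigma^j$: (i) $l_a(v_1) \prec_{\sigma^j} l_a(u_2)$, to push the leaf $v_1$ before the leaf $u_2$ inside $S_1$, and (ii) the $\gamma_2$-block must precede the $\alpha$-block. Condition (ii) is handled for \emph{any} $\sigma^j$ by the freedom to pick $\sigma_a^j$ when $\gamma_2 \prec_{\sigma^j} \alpha$ and $\overline{\sigma}_a^j$ otherwise, which is precisely what the reversed permutations were introduced for. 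Condition (i) is secured by the observation that $3$-suitability implies $2$-suitability: for any two distinct $p, q \in B$, applying the definition of $3$-suitability to the set $\{p, r, q\}$ with an arbitrary third element $r$ and designated element $q$ yields a $\sigma^j \in \E$ with $p \prec_{\sigma^j} q$. Specialising to $p = l_a(v_1)$ and $q = l_a(u_2)$ and then making the appropriate choice between $\sigma_a^j$ and $\overline{\sigma}_a^j$ completes the proof.
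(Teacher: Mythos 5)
Your proof is correct and follows essentially the same route as the paper: fix the star forest $C_a$ containing one edge, and case-split on how many endpoints of the other edge lie in the star containing the first, using $3$-suitability of the block labels and of the leaf labels together with the reversed permutations $\overline{\sigma}_a^j$ for the one-endpoint case. Your initial split on whether the two edges lie in the same star forest is just a reorganization (the paper absorbs it into the ``neither endpoint in $S$'' case), and your explicit remark that $3$-suitability implies $2$-suitability makes precise a step the paper uses implicitly.
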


Let $\{a,b\}, \{c,d\}$ be two disjoint edges in $G$. Let $C_i$ be the star forest which contains the edge $\{a, b \}$. We will show that one of the permutations in $\F_i$ constructed above will separate these two edges. Since the edge $\{a, b\}$ is present in $C_i$ for some $i \in [2k]$, the vertices $a$ and $b$ belong to the same star, say $S$, of $C_i$ with one of them, say $a$, as the root of $S$. If the vertices $c$ and $d$ are not in $S$ then $3$-suitability among the stars (blocks) is sufficient to separate the two edges. If $c$ and $d$ are in $S$, then the $3$-suitability within the leaves of $S$ suffices. If only one of $c$ or $d$ is in $S$, then the $3$-suitability among the leaves is sufficient to realise the separation of the two edges in one of the two corresponding permutations of the blocks. The details follow.

\setcounter{case}{0}

%
%

\begin{case}[$c,d \in V(S)$]
Then by Property \ref{property1Degeneracy}, $L_i(a) = L_i(b) = L_i(c) = L_i(d)$. Since $\E = \{\sigma_1 , \ldots , \sigma_r\}$ is a $3$-suitable family of permutations for $B= \{b_1, \ldots , b_n \}$, there exists a permutation, say $\sigma^j \in \E$, such that $\{l_i(c), l_i(d)\} \prec_{\sigma^j} \{l_i(b)\}$. Then, from Construction \ref{constructionBoxliDegeneracy}, we have $\{c,d\} \prec_{\sigma_i^j} b$. Since $a$ is the root vertex of the star $S$ in $C_i$ we also have $u \prec_{\sigma_i^j} a$, for all $u \in V(S) \setminus \{a\}$. Thus, $\{c,d\} \prec_{\sigma_i^j} \{a,b\}$.
\end{case}

\begin{case}[only $c \in V(S)$] 
Then, by Property \ref{property1Degeneracy}, $L_i(a) = L_i(b) = L_i(c)$ and $L_i(c) \neq L_i(d)$. Moreover, by Property \ref{property2Degeneracy}, $l_i(a)$, $l_i(b)$ and $l_i(c)$ are distinct. Since $\E$ is a $3$-suitable family of permutations for $B$, there exists a $\sigma^j \in \E$ such that $l_i(c) \prec_{\sigma^j} l_i(b)$. Combining this with the fact that $a$ is the root vertex of $S$, using Construction \ref{constructionBoxliDegeneracy}, we get $c \prec_{\sigma_i^j} b \prec_{\sigma_i^j} a$ and $c \prec_{\overline{\sigma}_i^j} b \prec_{\overline{\sigma}_i^j} a$. Recall that $L_i(c) \neq L_i(d)$. If $L_i(d) < L_i(c)$, then we get $d \prec_{\sigma_i^j} c \prec_{\sigma_i^j} b \prec_{\sigma_i^j} a$. Otherwise, we get $d \prec_{\overline{\sigma}_i^j} c \prec_{\overline{\sigma}_i^j} b \prec_{\overline{\sigma}_i^j} a$. 
\end{case}

\begin{case}[only $d \in V(S)$] 
This is similar to the previous subcase. 
\end{case}

\begin{case}[$c,d \notin V(S)$] 
If $c$ and $d$ belong to the same star in $C_i$, say $S'$, then by Property $P_1$,  we have $L_i(a) = L_i(b)$, $L_i(c) = L_i(d)$, and $L_i(a) \neq L_i(c)$. Then for any $j \in [r]$, either $L_i(a)  \prec_{\sigma^j} L_i(c)$ or $L_i(c) \prec_{\sigma^j} L_i(a)$. Therefore, either $\{a,b\} \prec_{\sigma_i^j} \{c,d\}$ or $\{c,d\} \prec_{\sigma_i^j} \{a,b\}$. If $c$ and $d$ belong to different stars in $C_i$, then Property $P_1$ ensures that $L_i(c)$, $L_i(d)$ and $L_i(a)$ are distinct. Since $\E$ is a $3$-suitable family of permutations for $B$, there exists a $\sigma^j \in \E$ such that $\{L_i(c), L_i(d)\} \prec_{\sigma^j} L_i(a)$. This, combined with Construction \ref{constructionBoxliDegeneracy}, implies that $\{c,d\} \prec_{\sigma_i^j} \{a,b\}$. 
\end{case}

Thus, we prove Claim \ref{claimBoxliDegeneracy}. Applying the same, we get $\boxli(G) \leq |\F| = \sum_{i=1}^{2k}|\F_i| = 4kr = 4k\floor{ \log \log n + \frac{1}{2}\log \log \log n + \log(\sqrt{2} \boxli) + o(1)  }$. 
\end{proof}

%


\section{Upper bound: Fully subdivided graphs} 
\label{sectionSubdivision}

In this section we establish an upper bound for $\boxli(G^{1/2})$ in terms of $\chi(G)$, where $\chi(G)$ denotes the chromatic number of $G$. The upper bound on $\boxli(G^{1/2})$ is obtained by constructing an interval order based on $G$ of height $\chi(G) - 1$ and then showing that its poset dimension is an upper bound on $\boxli(G^{1/2})$. We need some more definitions and notation before proceeding. 

\begin{definition}[Poset dimension]
\label{definitionPosetDimension}
Let $(\mathcal{P}, \lhd)$ be a poset (partially ordered set). A {\em linear extension} $L$ of $\mathcal{P}$ is a total order which satisfies $(x \lhd y \in \mathcal{P}) \implies (x \lhd y \in L)$. A {\em realiser} of $\mathcal{P}$ is a set of linear extensions of $\mathcal{P}$, say $\mathcal{R}$, which satisfy the following condition: for any two distinct elements $x$ and $y$, $x\lhd y \in \mathcal{P}$ if and only if $x \lhd y \in L$, $\forall L \in \mathcal{R}$.  
The \emph{poset dimension} of $\mathcal{P}$, denoted by $dim(\mathcal{P})$, is the minimum integer $k$ such that there exists a realiser of $\mathcal{P}$ of cardinality $k$. 
\end{definition}

\begin{definition}[Interval dimension]
\label{definitionIntervalDimension}
A \emph{open interval} on the real line, denoted as $(a,b)$, where $a,b \in \R$ and $a < b$, is the set $\{x \in \mathbb{R} : a <  x < b\}$. For a collection $C$ of open intervals on the real line the partial order $(C, \lhd)$ defined by the relation $(a,b) \lhd (c,d)$ if $b \leq c$ in $\R$ is called the {\em interval order} corresponding to $C$. The poset dimension of this interval order $(C,\lhd)$ is called the \emph{interval dimension} of $C$ and is denoted by $\idim(C)$.
\end{definition}

The major part of our proof of Theorem \ref{theoremSubdivisionChromaticNumber} is the following lemma.

\begin{lemma}
\label{lemmaSubdivisionIntervalOrder}
For any graph $G$ and a permutation $\sigma$ of $V(G)$, let $C_{G, \sigma}$ denote the collection of open intervals $(\sigma(u), \sigma(v)), \{u,v\} \in E(G), u \prec_{\sigma} v$. Then,
$$ \boxli(G^{1/2}) \leq \min_{\sigma} \idim ( C_{G,\sigma} ) + 2,$$
where the minimisation is done over all possible permutations $\sigma$ of $V(G)$.
\end{lemma}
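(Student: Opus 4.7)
The plan is to build a pairwise-suitable family for $G^{1/2}$ of size $d + 2$, where $d = \idim(C_{G,\sigma})$ for a permutation $\sigma$ of $V(G)$ achieving the minimum. Fix such a $\sigma$, let $L_1, \ldots, L_d$ be a realiser of $(C_{G,\sigma}, \lhd)$, and for each $e \in E(G)$ write its endpoints as $p_e \prec_\sigma q_e$, so that $x_e$ is identified with the interval $I_e = (\sigma(p_e), \sigma(q_e))$. The family will consist of two $\sigma$-dependent anchor permutations $\pi_1, \pi_2$ together with $d$ permutations $\tau_1, \ldots, \tau_d$ built from the realiser.

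I define $\pi_1$ by listing $V(G)$ in $\sigma$-order and inserting, immediately after each $u \in V(G)$, all subdivision vertices $x_e$ with $p_e = u$ in any fixed order; $\pi_2$ is defined symmetrically by inserting each $x_e$ immediately before $q_e$. In $\pi_1$ every edge of $G^{1/2}$ of the form $\{p_e, x_e\}$ lies inside the contiguous block indexed by $p_e$, and distinct blocks are disjoint and arranged in $\sigma$-order. Consequently, $\pi_1$ separates every disjoint pair $\{u, x_e\}, \{u', x_{e'}\}$ with $u = p_e$ and $u' = p_{e'}$, and symmetrically $\pi_2$ handles the pairs with $u = q_e$ and $u' = q_{e'}$.

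For each $\ell \in [d]$, I would construct $\tau_\ell$ by placing the subdivision vertices in $L_\ell$-order and inserting each $u \in V(G)$ immediately after the $L_\ell$-latest $x_e$ with $q_e = u$, or at the very beginning if no such $e$ exists. This is well-defined because $\sigma(u)$ is simultaneously the right endpoint of every ``ends-at-$u$'' interval and the left endpoint of every ``starts-at-$u$'' interval, so every ends-at-$u$ interval is $\lhd$-below every starts-at-$u$ interval, and every $L_\ell$ orders the two groups accordingly. The resulting $\tau_\ell$ then satisfies $x_e \prec_{\tau_\ell} u$ whenever $u = q_e$ and $u \prec_{\tau_\ell} x_e$ whenever $u = p_e$.

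To verify pairwise suitability I case-split on whether each of $u, u'$ is the $\sigma$-min or $\sigma$-max of its underlying $G$-edge. The (min, min) and (max, max) cases are handled directly by $\pi_1$ and $\pi_2$. In a mixed case, say $u = p_e$ and $u' = q_{e'}$, either one of $\pi_1, \pi_2$ already separates the pair by examining the $\sigma$-positions of the four endpoints, or else $\sigma(p_{e'}) \leq \sigma(u) < \sigma(u') \leq \sigma(q_e)$, which forces $I_e$ and $I_{e'}$ to be $\lhd$-incomparable. In that remaining sub-case the realiser supplies some $L_\ell$ with $x_e \prec_{L_\ell} x_{e'}$, and I claim the corresponding $\tau_\ell$ separates $\{u, x_e\}$ before $\{u', x_{e'}\}$; the (max, min) case is symmetric. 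The main obstacle is showing that this $\tau_\ell$ actually places $u$ before $u'$: using the $L_\ell$-latest ends-at-$u$ interval $x_h$ and ends-at-$u'$ interval $x_{h'}$ (or observing that $u$ has no ends-at interval and is thus placed first), this reduces to the chain $x_h \prec_{L_\ell} x_e \prec_{L_\ell} x_{e'} \preceq_{L_\ell} x_{h'}$, whose first inequality is the ends-at/starts-at comparability noted above.
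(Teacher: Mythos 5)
Your proposal is correct and follows essentially the same route as the paper: the two anchor permutations $\pi_1,\pi_2$ are exactly the paper's $\sigma_{d+1},\sigma_{d+2}$, and your $\tau_\ell$ (subdivision vertices in realiser order, each original vertex slotted after its last ``ends-at'' vertex and before its first ``starts-at'' vertex) matches the paper's $\sigma_p$ up to an immaterial difference in how the original vertices are inserted. The case analysis reduces to the same hard case of a left edge and a right edge with $\lhd$-incomparable intervals, settled by the realiser via the chain $u \prec_{\tau_\ell} x_e \prec_{\tau_\ell} x_{e'} \prec_{\tau_\ell} u'$.
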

\begin{proof}
Let $\sigma$ be any permutation of $V(G)$. We relabel the vertices of $G$ so that $v_1 \prec_{\sigma} \cdots \prec_{\sigma} v_n$, where $n = |V(G)|$. For every edge $e = \{v_i, v_j\} \in E(G), i < j$, the new vertex in $G^{1/2}$ introduced by subdividing $e$ is denoted as $u_{ij}$. For a new vertex $u_{ij}$, its two neighbours, $v_i$ and $v_j$ will be respectively called the {\em left neighbour} and {\em right neighbour} of $u_{ij}$. We call an edge of the form $\{v_i, u_{ij}\}$ as a {\em left edge} and one of the form $\{u_{ij}, v_j\}$ as a {\em right edge}.

Let $\mathcal{R} = \{L_1, \ldots, L_d\}$ be a realiser for $(C_{G, \sigma}, \lhd)$ such that $d = \idim(C_{G,\sigma})$. For each total order $L_p, p \in [d]$, we construct a permutation $\sigma_p$ of $V(G^{1/2})$ as follows. First, the subdivided vertices are ordered from left to right as the corresponding intervals are ordered in $L_p$, i.e, $u_{ij} \prec_{\sigma_p} u_{kl} \iff (i,j) \prec_{L_p} (k,l)$. Next the original vertices are introduced into the order one by one as follows. The vertex $v_1$ is placed as the left most vertex. Once all the vertices $v_i, i < j$ are placed, we place $v_j$ at the left most possible position so that $v_{j-1} \prec_{\sigma_p} v_j$ and $u_{ij} \prec_{\sigma_p} v_j, \forall i <j$. This ensures that $v_j \prec_{\sigma_p} u_{jk}, \forall k >j$ because $u_{ij'} \prec_{\sigma_p} u_{jk}, \forall j' \leq j$ (Since $(i,j) \lhd (j,k)$). Now we construct two more permutations $\sigma_{d+1}$ and $\sigma_{d+2}$ as follows. In both of them, first the original vertices are ordered as $v_1 \prec \cdots \prec v_n$. In $\sigma_{d+1}$, the subdivided vertices are placed immediately after its left neighbour, i.e., $v_i \prec_{\sigma_{d+1}} u_{ij} \prec_{\sigma_{d+1}} v_{i+1}$ for all $\{i, j \} \in E(G)$. In $\sigma_{d+2}$, the subdivided vertices are placed immediately before its right neighbour, i.e., $v_{j-1} \prec_{\sigma_{d+2}} u_{ij} \prec_{\sigma_{d+2}} v_{j}$ for all $\{i, j \} \in E(G)$. Notice that in all the permutations so far constructed, the left (right) neighbour of every subdivided vertex is placed to its left (right).

We complete the proof by showing that $\F = \{\sigma_1, \ldots, \sigma_{d+2}\}$ is pairwise suitable for $G^{1/2}$ by analysing the following cases.  Any two disjoint left edges are separated in $\sigma_{d+1}$ and any two disjoint right edges are separated in $\sigma_{d+2}$. If $(i,j) \lhd (k,l)$, then every pair of disjoint edges among those incident on $u_{ij}$ or $u_{kl}$ are separated in every permutation in $\F$. Hence the only non-trivial case is when we have a left edge $\{v_i, u_{ij}\}$ and a right edge $\{u_{kl}, v_l\}$ such that $(i,j) \cap (k,l) \neq \emptyset$. Since $(i,j)$ and $(k,l)$ are incomparable in $(C_{G, \sigma}, \lhd)$, there exists a permutation $\sigma_p, p \in [d]$ such that $u_{ij} \prec_{\sigma_p} u_{kl}$. Since $v_i$ is before $u_{ij}$ and $v_l$ is after $u_{kl}$ in every permutation, $\sigma_p$ separates $\{v_i, u_{ij}\}$ from $\{u_{kl}, v_l\}$.  
\end{proof}

\subsubsection*{Proof of Theorem \ref{theoremSubdivisionChromaticNumber}} 

The {\em height} of a partial order is the size of a largest chain in it. It was shown by F\"{u}redi, Hajnal, R\"{o}dl and Trotter \cite{furedi1991interval} that the dimension of an interval order of height $h$ is at most $\log\log h + (\frac{1}{2} + o(1))\log\log\log h$ (see also Theorem $9.6$ in \cite{trotter1997new}). A proof of theorem \ref{theoremSubdivisionChromaticNumber} is now immediate.

\bigskip
\noindent \textit{Statement.} For a graph $G$ with chromatic number $\chi(G)$, 
$$ 
	\boxli(G^{1/2}) \leq 
		\log\log (\chi(G)-1) + 
		\left( \frac{1}{2} + o(1) \right) \log\log\log (\chi(G)-1) + 2.
$$

\begin{proof}
Let $V_1, \ldots, V_{\chi(G)}$ be the colour classes of an optimal proper colouring of $G$. Let $\sigma$ be a permutation of $V(G)$ such that $V_1 \prec_{\sigma} \cdots \prec_{\sigma} V_{\chi(G)}$. Now it is easy to see that the longest chain in $(C_{G,\sigma}, \lhd)$ is of length at most $\chi(G) - 1$. Hence the statement follows from the result of F\"{u}redi et al. \cite{furedi1991interval} and Lemma \ref{lemmaSubdivisionIntervalOrder} above.
\end{proof}



\section{Lower bound: Fully subdivided clique} 
\label{sectionSubdividedClique}

It easily follows from Theorem \ref{theoremSubdivisionChromaticNumber} that $\boxli(K_n^{1/2}) \in O(\log \log n)$. In this section we prove that $\boxli(K_n^{1/2}) \geq \frac{1}{2} \log \log (n-1)$, showing the near tightness of that upper bound. We give a brief outline of the proof below. (Definitions of the new terms are given before the formal proof.) 

First, we use Erd\H{o}s-Szekeres Theorem \cite{ErdosSzekeres} to argue that for any family $\F$ of permutations of $V(K_n^{1/2})$, with $|\F| < \frac{1}{2} \log \log n$, a subset $V'$ of original vertices of $K_n^{1/2}$, with $n' = |V'| \approx 2^{\sqrt{\log n}}$, is ordered essentially in the same way by every permutation in $\F$. Since the ordering of the vertices in $V'$ are fixed, the only way for $\F$ to realise pairwise suitability among the edges in the subdivided paths between vertices in $V'$ is to find suitable positions for the new vertices (those introduced by subdivisions) inside the fixed order of $V'$. We then show that this amounts to constructing a realiser for the canonical open interval order $(C_{n'}, \lhd)$ and hence $|\F|$, in this case, is lower bounded by the poset dimension of $(C_{n'}, \lhd)$ which is known to be at least $\log \log (n'-1) \approx \frac{1}{2} \log \log (n-1)$. 

\begin{definition}[Canonical open interval order]
\label{definitionCanonicalOpenInterval}
For a positive integer $n$, let $C_n = \{(a,b) : a, b \in [n], a < b \}$ be the collection of all the ${n \choose 2}$ open intervals which have their endpoints  in $[n]$. Then $(C_n,\lhd)$, the interval order corresponding to the collection $C_n$, is called the {\em canonical open interval order}. 
\end{definition} 

Usually the canonical interval order is defined over closed intervals.  For a positive integer $n$, let $I_{n} = \{[a,b]: a, b \in [n], a \leq b \}$ be the collection of all the ${n+1 \choose 2}$ closed intervals which have their endpoints in $[n]$. The poset $(I_n,\lhd')$, where $[i, j] \lhd' [k,l] \iff j < k$ is called the {\em canonical (closed) interval order} in literature. It is easy to see that $f: (C_n, \lhd) \into (I_{n-1}, \lhd')$, with $f((i,j)) = [i, j-1]$ is an isomorphism. It is well known that the dimension of $(I_{n-1}, \lhd')$ and hence $(C_n, \lhd)$ is at most $\log\log (n-1) + (\frac{1}{2} + o(1))\log\log\log (n-1)$. We state below the known lower bound for the same for later reference.

\begin{theorem}[F\"{u}redi, Hajnal, R\"{o}dl, Trotter \cite{furedi1991interval}]
\label{theoremIdimCanonicalLowerBound}
$$dim(C_n) \geq \log\log(n-1),$$
\end{theorem}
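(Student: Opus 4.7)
My plan is to prove the contrapositive: any realizer $\{L_1,\ldots,L_d\}$ of $(C_n,\lhd)$ forces $n \le 2^{2^d}+1$. I would achieve this by extracting from the realizer a proper vertex coloring of the \emph{double shift graph} $\mathrm{DS}(n)$, whose vertex set is $\binom{[n]}{2}$ and whose edges are the pairs $\{\{i,j\},\{j,k\}\}$ with $i<j<k$, and then invoking the classical chromatic bound $\chi(\mathrm{DS}(n)) \ge \log\log(n-1)$.

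The coloring is built pair by pair. For each $\{i,j\}$ with $1 \le i<j\le n$, associate the witness intervals $A_j = (1,j)$ and $B_i = (i,n)$, which overlap in $[n]$ and hence form an incomparable pair in $(C_n,\lhd)$. Define $\sigma(\{i,j\}) \in \{0,1\}^d$ to have its $m$-th coordinate equal to $1$ iff $A_j <_{L_m} B_i$. Incomparability guarantees each signature is nontrivial. The crucial step is to verify that $\sigma$ is a proper $\mathrm{DS}(n)$-coloring: for $i<j<k$, assuming $\sigma(\{i,j\}) = \sigma(\{j,k\})$, one combines the forced comparabilities $A_j \lhd B_j$ and $A_k \lhd B_k$ (which hold in every $L_m$) with the assumed matching orders of $(A_j,B_i)$ and $(A_k,B_j)$ to produce, in some $L_m$, a chain that contradicts linearity. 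If this single-bit signature is not itself adequate, I would enlarge the witness set --- for example by recording also the relative orders of $(A_j,A_k)$ and $(B_i,B_j)$ --- keeping the alphabet size $2^{O(d)}$.

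The lower bound on $\chi(\mathrm{DS}(n))$ is the classical Ramsey-flavored computation: given a proper $t$-coloring $\tau$, for each $p \in [n]$ look at the color profile $q \mapsto \tau(\{p,q\})$ for $q>p$. Applying pigeonhole iteratively along $1,2,\ldots,n$ --- each iteration extracting a sub-interval of $[n]$ on which the colors stabilize along one coordinate --- and using the adjacency constraint at each step, collapses the structure sufficiently to force $n \le 2^{2^t}+1$, which after rearrangement yields $t \ge \log\log(n-1)$.

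The main obstacle I expect is the combinatorial verification that $\sigma$ is a proper $\mathrm{DS}(n)$-coloring. The naive one-bit signature may fail to distinguish adjacent vertices: matching $A_j <_{L_m} B_i$ and $A_k <_{L_m} B_j$ in a given $L_m$ is consistent with several layouts of $A_j, A_k, B_i, B_j$ in $L_m$, and neither $A_j \lhd B_j$ nor $A_k \lhd B_k$ immediately rules them out. Finding the minimal augmentation of the signature (a few extra interval comparisons) that forces a contradiction while keeping the alphabet size $2^{O(d)}$ is the delicate step of the plan; once that reduction is in place, the chromatic lower bound for $\mathrm{DS}(n)$ delivers the theorem.
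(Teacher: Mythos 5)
The paper does not actually prove this statement: it is imported verbatim from F\"{u}redi--Hajnal--R\"{o}dl--Trotter \cite{furedi1991interval} and used as a black box in the lower bound for $\boxli(K_n^{1/2})$. So you are reconstructing a literature proof, and your overall architecture --- turn a realiser of $(C_n,\lhd)$ into a proper colouring of a shift-type graph and invoke a chromatic lower bound --- is indeed the standard route. But as written the plan has a quantitative short-circuit. The graph you define, with vertex set $\binom{[n]}{2}$ and edges $\{i,j\}\sim\{j,k\}$, is the \emph{single} shift graph, whose chromatic number is $\lceil\log n\rceil$ (a proper $t$-colouring forces $n\le 2^{t}$, via distinctness of the sets $\{\tau(\{i,j\}):i<j\}$). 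The bound you invoke --- ``a $t$-colouring forces $n\le 2^{2^t}+1$, hence $\chi\ge\log\log(n-1)$'' --- is the computation for the \emph{double} shift graph on triples $\{i,j,k\}\sim\{j,k,l\}$. Feeding your own numbers through your own pipeline, a palette of size $2^{d}$ together with $\chi\ge\log\log(n-1)$ yields only $d\ge\log\log\log(n-1)$, one logarithm short of the theorem. To close this you must either pair the pairs-graph with its true chromatic number (so that $2^{d}\ge\log(n-1)$ gives $d\ge\log\log(n-1)$), or colour the triples graph with only $d$ colours; and note that ``alphabet size $2^{O(d)}$'' forfeits the leading constant, which the stated inequality has no room to lose.

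The second, deeper gap is the one you flag yourself: properness of the signature. It is not merely delicate --- the one-bit-per-extension signature with witnesses $A_j=(1,j)$, $B_i=(i,n)$ is provably insufficient. The only forced relation among the four intervals $(1,j),(i,n),(1,k),(j,n)$ is $(1,j)\lhd(j,n)$, and one checks that both the all-zero configuration $(i,n)<(1,j)<(j,n)<(1,k)$ and the all-one configuration $(1,j)<(i,n)$, $(1,k)<(j,n)$ are realisable by linear extensions, so adjacent pairs can receive equal signatures and no contradiction arises from the realiser property applied to $(1,k)$ versus $(i,n)$. Your proposed repair --- additionally recording the relative orders of $(A_j,A_k)$ and $(B_i,B_j)$ --- is not a legal vertex colouring of $\{i,j\}$, since it refers to the index $k$ of the \emph{other} endpoint of the edge; a colour must be a function of $i$, $j$ and the realiser alone. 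Until a concrete signature is exhibited and verified proper (this is precisely the content of the F\"{u}redi--Hajnal--R\"{o}dl--Trotter argument), the proof is not complete.
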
 

\subsubsection*{Proof of Theorem \ref{theoremKnHalf}}

\noindent \textit{Statement.}
Let $K_n^{1/2}$ denote the graph obtained by fully subdividing $K_n$. Then, 
$$ \frac{1}{2} \floor{\log\log(n-1)} \leq \boxli(K_n^{1/2}) \leq (1 + o(1))\log\log (n-1).$$ 

\begin{proof}
The upper bound follows from Theorem \ref{theoremSubdivisionChromaticNumber}. So it suffices to show the lower bound. 

Let $v_1, \ldots , v_n$ denote the \textit{original vertices} (the vertices of degree $n-1$) in $K_n^{1/2}$ and let $u_{ij}$, $i, j \in [n], i < j$, denote the new vertex of degree $2$ introduced when the edge $\{i,j\}$ of $K_n$ was subdivided. Let $\F$ be a family of permutations that is pairwise suitable for $K_n^{1/2}$ such that $|\F| = r = \boxli(K_n^{1/2})$. For convenience, let us assume that $n$ is exactly one more than a power of power of $2$, i.e., $\log\log (n-1) \in \N$. The floor in the lower bound gives the necessary correction otherwise when we bring $n$ down to the largest such number below $n$. Let $p=(n-1)^{1/2^r} + 1$. 

By Erd\H{o}s-Szekeres Theorem \cite{ErdosSzekeres},  we know that if $\tau$ and $\tau'$ are two permutations of $[n^2 + 1]$, then there exists some $X \subseteq [n^2 + 1]$ with $|X|=n+1$ such that the permutations $\tau$ and $\tau'$ when restricted to $X$ are the same or reverse of each other. By repetitive application of this argument, we can see that there exists a set $X$ of $p$ original vertices of $K_n^{1/2}$ such that, for each $\sigma, \sigma' \in \F$, the permutation of $X$ obtained by restricting $\sigma$ to $X$ is the same or reverse of the permutation obtained by restricting $\sigma'$ to $X$. Without loss of generality, let $X = \{v_1, \ldots , v_p\}$ such that, for each $\sigma \in \F$, either $v_1 \prec_{\sigma} \cdots \prec_{\sigma} v_p$ or $v_p \prec_{\sigma} \cdots \prec_{\sigma} v_1$. Now we ``massage'' $\F$ to give it two nice properties without changing its cardinality or sacrificing its pairwise suitability for $K_n^{1/2}$.

Note that if a family of permutations is pairwise suitable for a graph then the family retains this property even if any of the permutations in the family is reversed. Hence we can assume the following property without loss of generality.

\setcounter{property}{0}
\begin{property}
\label{property1KnHalf}
$v_1 \prec_{\sigma} \cdots \prec_{\sigma} v_p, \forall \sigma \in \F$. 
\end{property}

Consider any $i,j \in [p], i < j$. For each $\sigma \in \F$, it is safe to assume that $v_i \prec_{\sigma} u_{ij} \prec_{\sigma} v_j$. Otherwise, we can modify the permutation $\sigma$ such that $\F$ is still a pairwise suitable family of permutations for $K_n^{1/2}$. To demonstrate this, suppose $v_i  \prec_{\sigma} v_j \prec_{\sigma} u_{ij}$. Then, we modify $\sigma$ such that $u_{ij}$ is the immediate predecessor of $v_j$. It is easy to verify that, for each pair of disjoint edges $e,f \in E(K_n^{1/2})$, if $e \prec_{\sigma} f$ or  $f \prec_{\sigma} e$ then the same holds in the modified $\sigma$ too. Similarly, if $u_{ij} \prec_{\sigma} v_i  \prec_{\sigma} v_j$ then we modify $\sigma$ such that $u_{ij}$ is the immediate successor of $v_i$. Hence we can assume the next property also without loss in generality.

\begin{property}
\label{property2KnHalf}
$v_i \prec_{\sigma} u_{ij} \prec_{\sigma} v_j, \forall i, j \in [p], i <j,~ \forall \sigma \in \F$.
\end{property}

These two properties ensure that for any two open intervals $(i,j)$ and $(k,l)$ in $C_p$ if $(i,j) \lhd (k,l)$ then $u_{ij} \prec_{\sigma} u_{kl}, \forall \sigma \in \F$. In the other case, i.e., when $(i,j) \cap (k,l) \neq \emptyset$, we make the following claim.

\begin{claim}
\label{claim1KnHalf}
Let $i,j,k,l \in [p]$ such that $(i,j) \cap (k,l) \neq \emptyset$. Then there exist $\sigma_a, \sigma_b \in \F$ such that $u_{ij} \prec_{\sigma_a} u_{kl}$ and $u_{kl} \prec_{\sigma_b} u_{ij}$.
\end{claim}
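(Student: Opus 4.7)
The plan is to exhibit, for any two distinct overlapping intervals $(i,j), (k,l) \in C_p$, two carefully chosen pairs of disjoint edges of $K_n^{1/2}$ whose separation (demanded by pairwise suitability of $\F$) is forced by Property \ref{property1KnHalf} to realise each of the two orderings between $u_{ij}$ and $u_{kl}$ in some member of $\F$. The scheme is entirely explicit: one pair pins $u_{ij} \prec_{\sigma_a} u_{kl}$ and its ``mirror'' pair pins $u_{kl} \prec_{\sigma_b} u_{ij}$.

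By swapping the two intervals if necessary, I assume $i \leq k$. Combined with $k < l$ and the overlap condition, this gives $k < j$, so we are in either the sub-case $i \leq k < j \leq l$ (with $(i,j) \neq (k,l)$, forcing $i < k$ or $j < l$) or the sub-case $i \leq k < l < j$. In either sub-case I would take the pair $\{v_i, u_{ij}\}$ and $\{u_{kl}, v_l\}$. These are disjoint by a routine check: the $v$-endpoints differ because $i < l$ in both sub-cases, the $u$-endpoints differ because $(i,j) \neq (k,l)$, and cross-type endpoints are automatically distinct. Pairwise suitability provides some $\sigma_a \in \F$ separating this pair. Property \ref{property1KnHalf} forces $v_i \prec_{\sigma_a} v_l$, which rules out the ordering $\{u_{kl}, v_l\} \prec_{\sigma_a} \{v_i, u_{ij}\}$; hence $\{v_i, u_{ij}\} \prec_{\sigma_a} \{u_{kl}, v_l\}$, and in particular $u_{ij} \prec_{\sigma_a} u_{kl}$.

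Symmetrically, for the reverse direction I would take the pair $\{u_{ij}, v_j\}$ and $\{v_k, u_{kl}\}$, whose disjointness uses $k < j$ (giving $v_j \neq v_k$) together with $(i,j) \neq (k,l)$. Pairwise suitability supplies some $\sigma_b \in \F$ separating this pair; Property \ref{property1KnHalf} yields $v_k \prec_{\sigma_b} v_j$, which excludes $\{u_{ij}, v_j\} \prec_{\sigma_b} \{v_k, u_{kl}\}$, so $\{v_k, u_{kl}\} \prec_{\sigma_b} \{u_{ij}, v_j\}$ and hence $u_{kl} \prec_{\sigma_b} u_{ij}$. The heart of the argument is recognising that in each chosen pair one $v$-endpoint ($v_i$ on the left in the first pair, $v_j$ on the right in the second) is forced by Property \ref{property1KnHalf} to be extreme among the four endpoints, eliminating one of the two possible separations. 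I expect no real obstacle beyond the mild bookkeeping of disjointness in the boundary sub-cases $i = k$ with $j < l$ and $i < k$ with $j = l$, both of which reduce immediately to $(i,j) \neq (k,l)$.
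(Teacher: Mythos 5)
Your proof is correct and is essentially the paper's own argument: both use the same two edge pairs, $\{v_i,u_{ij}\}$ versus $\{v_l,u_{kl}\}$ and $\{v_j,u_{ij}\}$ versus $\{v_k,u_{kl}\}$, and let Property \ref{property1KnHalf} (via $i<l$ and $k<j$) eliminate one of the two possible separations of each pair. The paper phrases this as a proof by contradiction while you argue directly (and you are slightly more careful about disjointness), but the route is the same.
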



Since $(i,j) \cap (k,l) \neq \emptyset$, we have $k < j$ and $i < l$. Hence by Property \ref{property1KnHalf}, $\forall \sigma \in \F$, $v_k \prec_{\sigma} v_j$ and $v_i \prec_{\sigma} v_l$. Now we prove the claim by contradiction. If  $u_{ij} \prec_{\sigma} u_{kl}$ for every $\sigma \in \F$ then, together with the fact that $v_k \prec_{\sigma} v_j, \forall \sigma \in \F$, we see that no $\sigma \in \F$ can separate the edges $\{v_j, u_{ij}\}$ and $\{v_k, u_{kl}\}$. But this contradicts the fact that $\F$ is a pairwise suitable family of permutations for $K_n^{1/2}$.  Similarly if $u_{kl} \prec_{\sigma} u_{ij}$ for every $\sigma \in \F$ then, together with the fact that $v_i \prec_{\sigma} v_l, \forall \sigma \in \F$, we see that no $\sigma \in \F$ can separate $\{v_i, u_{ij}\}$ and $\{v_l, u_{kl}\}$. But this too contradicts the pairwise suitability of $\F$. Thus we prove Claim \ref{claim1KnHalf}.

With these two properties and the claim above, we are ready to prove the following claim.

\begin{claim}
\label{claim2KnHalf}
$|\F| \geq \idim((C_p, \lhd))$.
\end{claim}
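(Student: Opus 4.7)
The plan is to show that each $\sigma \in \F$ induces, via the order it imposes on the subdivided vertices $u_{ij}$, a linear extension of $(C_p, \lhd)$, and that the resulting collection of at most $|\F|$ linear extensions forms a realiser of $(C_p, \lhd)$. This immediately yields $\idim((C_p,\lhd)) \leq |\F|$.

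More concretely, for each $\sigma \in \F$ I would define a total order $L_\sigma$ on $C_p$ by declaring $(i,j) \prec_{L_\sigma} (k,l)$ exactly when $u_{ij} \prec_\sigma u_{kl}$. The fact that $L_\sigma$ is a linear extension of $(C_p, \lhd)$ is the first verification: whenever $(i,j) \lhd (k,l)$ holds (i.e., $j \leq k$), Properties \ref{property1KnHalf} and \ref{property2KnHalf} together force $u_{ij} \prec_\sigma v_j \preceq_\sigma v_k \prec_\sigma u_{kl}$ in every $\sigma \in \F$, so $(i,j) \prec_{L_\sigma} (k,l)$ as required.

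Next I would check that $\mathcal{R} = \{L_\sigma : \sigma \in \F\}$ is in fact a realiser of $(C_p,\lhd)$. This requires that for any two incomparable intervals $(i,j), (k,l) \in C_p$ -- equivalently, $(i,j) \cap (k,l) \neq \emptyset$ -- both orders $(i,j) \prec_L (k,l)$ and $(k,l) \prec_L (i,j)$ appear among the linear extensions in $\mathcal{R}$. But this is exactly the content of Claim \ref{claim1KnHalf}: some $\sigma_a \in \F$ witnesses $u_{ij} \prec_{\sigma_a} u_{kl}$ and some $\sigma_b \in \F$ witnesses the reverse, so $L_{\sigma_a}$ and $L_{\sigma_b}$ supply the two required extensions. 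Hence $\mathcal{R}$ is a realiser of $(C_p,\lhd)$ and $\idim((C_p,\lhd)) \leq |\mathcal{R}| \leq |\F|$, proving the claim.

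I do not expect any serious obstacle here: the heavy lifting has already been done in establishing Properties \ref{property1KnHalf}, \ref{property2KnHalf} and Claim \ref{claim1KnHalf}. The only subtlety to be careful about is the trivial possibility that two different $\sigma \in \F$ could produce the same $L_\sigma$ (which only makes the bound stronger since $|\mathcal{R}| \leq |\F|$), and ensuring that the map $\sigma \mapsto L_\sigma$ is well-defined as a linear extension, which is immediate because $\sigma$ is itself a total order on $V(K_n^{1/2})$ and hence on the set $\{u_{ij}\}$.
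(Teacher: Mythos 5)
Your proof is correct and follows essentially the same route as the paper: define $L_\sigma$ by the order of the subdivided vertices, use Properties \ref{property1KnHalf} and \ref{property2KnHalf} to verify each $L_\sigma$ is a linear extension, and invoke Claim \ref{claim1KnHalf} to conclude that $\{L_\sigma\}_{\sigma\in\F}$ is a realiser. Your explicit chain $u_{ij} \prec_\sigma v_j \preceq_\sigma v_k \prec_\sigma u_{kl}$ is exactly the justification the paper leaves implicit.
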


For every $\sigma \in \F$, construct a total order $L_{\sigma}$ of $C_p$ such that $(i,j) \lhd (k,l) \in L_{\sigma} \iff u_{ij} \prec_{\sigma} u_{kl}$. By Property \ref{property1KnHalf} and Property \ref{property2KnHalf}, $L_{\sigma}$ is a linear extension of $(C_p, \lhd)$. Further, Claim \ref{claim1KnHalf} ensures that $\mathcal{R} = \{L_{\sigma}\}_{\sigma \in \F}$ is a realiser of $(C_p, \lhd)$. Hence $|\F| = |\mathcal{R}| \geq \idim((C_p, \lhd))$.


Now we are ready to show the final claim which settles the lower bound.

\begin{claim}
\label{claim3KnHalf}
$|\F| \geq \frac{1}{2}\log \log (n-1)$. 
\end{claim}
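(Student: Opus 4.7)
The plan is to chain Claim~\ref{claim2KnHalf} with the lower bound on the dimension of the canonical open interval order from Theorem~\ref{theoremIdimCanonicalLowerBound}, and then solve the resulting inequality in closed form for $r = |\F|$.

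First I would invoke Claim~\ref{claim2KnHalf} to obtain $r = |\F| \geq \idim((C_p, \lhd))$, and then apply Theorem~\ref{theoremIdimCanonicalLowerBound} to conclude
$$r \;\geq\; \log\log(p-1).$$
Since $p$ was fixed at the start of the proof so that $p - 1 = (n-1)^{1/2^r}$, taking the double logarithm gives
$$\log\log(p-1) \;=\; \log\!\left(\frac{\log(n-1)}{2^r}\right) \;=\; \log\log(n-1) - r.$$
Substituting back yields the self-referential inequality $r \geq \log\log(n-1) - r$, which rearranges to the desired bound $r \geq \tfrac{1}{2}\log\log(n-1)$.

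The only subtlety is that $p$ need not be an integer in general; this is absorbed by the simplifying assumption made earlier in the proof that $\log\log(n-1) \in \N$ (equivalently, that $n-1$ is a power of a power of $2$), together with the floor in the statement of Theorem~\ref{theoremKnHalf}, which accounts for the round-off when one has to drop $n$ down to the largest such value. Beyond this bookkeeping there is no real obstacle: all the genuinely combinatorial work has already been completed in extracting the set $X$ via Erd\H{o}s--Szekeres, establishing Properties~\ref{property1KnHalf} and~\ref{property2KnHalf}, and proving Claims~\ref{claim1KnHalf} and~\ref{claim2KnHalf}. The present claim is essentially a one-line calculation that ties those pieces together with the known dimension lower bound for the canonical interval order.
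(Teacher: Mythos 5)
Your proposal is correct and uses exactly the same ingredients as the paper's proof (Claim~\ref{claim2KnHalf}, Theorem~\ref{theoremIdimCanonicalLowerBound}, and the identity $\log\log(p-1)=\log\log(n-1)-r$); the only difference is that you solve the inequality $r\geq\log\log(n-1)-r$ directly, whereas the paper phrases the same computation as a proof by contradiction. This is a purely stylistic difference, and your direct version is, if anything, slightly cleaner.
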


Suppose for contradiction that $|\F| = r < \frac{1}{2}\log \log (n-1)$. Then, by Claim \ref{claim2KnHalf}, $r \geq \idim((C_p, \lhd))$ where $p = (n-1)^{1/2^r} + 1 > 2^{\sqrt{\log (n-1)}} + 1$. But then, by Theorem \ref{theoremIdimCanonicalLowerBound}, we have $r \geq \log \log (p-1) > \log \log (2^{\sqrt{\log (n-1)}}) = \frac{1}{2}\log \log (n-1)$ which contradicts our starting assumption. 
\end{proof}




\end{document}